\newtheorem{theorem}{Theorem}
\newtheorem{remark}{Remark}
\theoremstyle{definition}
\begin{document}

\title{On one class of Orlicz functions}

\author{Sergey V. Astashkin}
%    Address of record for the research reported here
\address{Department of Mathematics,
Samara National Research University, Moskovskoye shosse 34, 443086,
Samara, Russia}
%    Current address
\email{astash56@mail.ru}
%    \thanks will become a 1st page footnote.
\thanks{$^*$The work was supported by the Ministry of Science and Higher Education of the Russian Federation,
project 1.470.2016/1.4 and by the RFBR grant 18-01-00414.}

%    General info
\subjclass{Primary 46E30, 46B50; Secondary 46B20, 46B42}
\date{January 22, 2016 and, in revised form, January 22, 2016.}

\keywords{Orlicz function, Orlicz space, rearrangement invariant space, Dunford-Pettis criterion of weak compactness, $p$-disjointly homogeneous space, Matuszewska-Orlicz indices}

\begin{abstract}
Answering to a recent question raised by Le\'{s}nik, Maligranda, and Tomaszewski, we prove that there is an Orlicz function $\Phi$ with the upper Matuszewska-Orlicz index equal to $1$ such that the Orlicz space $L_\Phi$ does not satisfy Dunford-Pettis criterion of weak compactness.
\end{abstract}

\maketitle

\markright{On one class of Orlicz functions}

%\section{Introduction}

Recall that a set $K\subset L_1:=L_1([0,1],\mu)$, where $\mu$ is the usual Lebesgue measure, is called {\it equi-integrable} (or {\it uniformly integrable}) if
$$
\lim_{\delta\to 0}\sup_{\mu(E)<\delta}\sup_{x\in K}\int_E |x(t)|\,dt=0.$$
The following famous description of relatively weakly compact subsets in $L_1(\mu)$ as equi-integrable sets is due to N.~Dunford and B.~J.~Pettis (see \cite{DF-1940} or \cite[Theorem~5.2.9]{AK}).

\begin{theorem}\label{Th D-P}
Let $K\subset L_1$. The following conditions are equivalent:

(i) $K$ is relatively weakly compact in $L_1$;

(ii) $K$ is equi-integrable.
\end{theorem}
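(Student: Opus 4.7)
The plan is to prove the two implications separately, the technically harder one being (i) $\Rightarrow$ (ii).

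For (ii) $\Rightarrow$ (i), first observe that equi-integrability forces a uniform $L_1$-bound on $K$: choose $\delta>0$ with $\int_E|x|<1$ whenever $\mu(E)<\delta$ and $x\in K$, then partition $[0,1]$ into finitely many sets of measure less than $\delta$. Given $(x_n)\subset K$, consider the truncations $T_N x:=x\chi_{\{|x|\le N\}}$; equi-integrability gives $\sup_{x\in K}\|x-T_Nx\|_1\to 0$. For each fixed $N$ the family $\{T_Nx_n\}$ is bounded in $L_\infty\subset L_2$, so reflexivity of $L_2$ combined with a diagonal extraction yields a subsequence $(x_{n_k})$ along which $(T_Nx_{n_k})_k$ converges weakly in $L_2$ (hence, via the $L_\infty$-bound, weakly in $L_1$) for every $N$. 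A standard $3\varepsilon$-argument, based on the uniform approximation by $T_N$, then shows that $\ell(g):=\lim_k\int g\,x_{n_k}$ exists for every $g\in L_\infty$; equi-integrability makes $A\mapsto\ell(\chi_A)$ countably additive and $\mu$-continuous, and Radon-Nikodym represents $\ell$ as integration against some $x_\infty\in L_1$. The Eberlein--\v{S}mulian theorem then upgrades sequential weak compactness to relative weak compactness of $K$.

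For (i) $\Rightarrow$ (ii) I argue by contradiction. If $K$ is relatively weakly compact but not equi-integrable, there exist $\varepsilon>0$, $x_n\in K$ and measurable $A_n$ with $\mu(A_n)\to 0$ and $\int_{A_n}|x_n|\ge\varepsilon$. Extracting a weakly convergent subsequence $x_n\to x$ and replacing $x_n$ by $x_n-x$, I may assume $x_n\to 0$ weakly in $L_1$, while $\int_{A_n}|x|\to 0$ still gives $\int_{A_n}|x_n|\ge\varepsilon/2$. Splitting into positive and negative parts and passing to a subsequence, I arrange $\int_{A_n}x_n^+\ge\varepsilon/4$; then $A_n':=A_n\cap\{x_n>0\}$ satisfies $\int_{A_n'}x_n\ge\varepsilon/4$ with $\mu(A_n')\to 0$.

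The key technical step is a disjointification. I inductively select a rapidly-growing subsequence $(n_k)$ as follows: having chosen $n_1<\cdots<n_k$, use the absolute continuity of each individual $|x_{n_j}|$ (for $j\le k$) to pick $n_{k+1}$ so large that $\int_A|x_{n_j}|<\varepsilon/2^{k+3}$ for every $j\le k$ whenever $\mu(A)\le\sum_{m>k}\mu(A_{n_m}')$, which is possible because I am still free to make the later $\mu(A_{n_m}')$ arbitrarily small. The sets $B_k:=A_{n_k}'\setminus\bigcup_{m>k}A_{n_m}'$ are then pairwise disjoint, lie inside $\{x_{n_k}>0\}$, and satisfy $\int_{B_k}x_{n_k}\ge\varepsilon/8$. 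This is the principal obstacle of the proof: only per-function absolute continuity is available, since uniform absolute continuity over $K$ is precisely the conclusion one is trying to reach.

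To finish, for each $\sigma=(\sigma_m)\in\ell_\infty$ the function $\phi_\sigma:=\sum_m\sigma_m\chi_{B_m}$ belongs to $L_\infty$ with $\|\phi_\sigma\|_\infty\le\|\sigma\|_\infty$. Since $x_{n_k}\to 0$ weakly, $\int\phi_\sigma x_{n_k}=\sum_m\sigma_m a_{m,k}\to 0$, where $a_{m,k}:=\int_{B_m}x_{n_k}$. Hence the vectors $v_k:=(a_{m,k})_m\in\ell_1$ (with $\|v_k\|_{\ell_1}\le\|x_{n_k}\|_1$ uniformly bounded) converge to $0$ in $\sigma(\ell_1,\ell_\infty)$, i.e.\ weakly in $\ell_1$. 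The Schur property of $\ell_1$ forces $\|v_k\|_{\ell_1}\to 0$, contradicting $\|v_k\|_{\ell_1}\ge a_{k,k}\ge\varepsilon/8$.
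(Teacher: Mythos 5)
The paper offers no proof of this statement: it is quoted as the classical Dunford--Pettis theorem with a pointer to the original 1940 paper and to the Albiac--Kalton monograph, so there is no in-paper argument to compare against. Your proof is correct and essentially self-contained. The direction (ii) $\Rightarrow$ (i) --- uniform boundedness, truncation, weak compactness of bounded sets in $L_2$ with a diagonal extraction, a $3\varepsilon$-argument, Radon--Nikodym for the $\mu$-continuous set function $A\mapsto\ell(\chi_A)$, and Eberlein--\v{S}mulian --- is the standard route. For (i) $\Rightarrow$ (ii) you disjointify the exceptional sets and then contradict weak nullity of $(x_{n_k})$ via the Schur property of $\ell_1$; the textbook treatment in the cited source instead shows that a bounded, non-equi-integrable set contains a sequence equivalent to (and spanning a complemented copy of) the unit vector basis of $\ell_1$ (Kadec--Pe\l czy\'nski/Rosenthal subsequence splitting), which yields more structural information, whereas your appeal to Schur's theorem --- which has an elementary gliding-hump proof independent of the present statement, so there is no circularity --- is a clean shortcut that proves exactly what is needed. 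One small point to tighten: in the inductive selection of $(n_k)$, the condition you impose when choosing $n_{k+1}$ refers to the tail $\sum_{m>k}\mu(A'_{n_m})$, which depends on choices not yet made. The standard repair is to record at stage $k$ a threshold $\delta_k>0$ witnessing the absolute continuity of the finitely many functions $x_{n_1},\dots,x_{n_k}$, and then to demand $\mu(A'_{n_{k+1}})<2^{-(k+1)}\min_{j\le k}\delta_j$; this forces every tail sum $\sum_{m>k}\mu(A'_{n_m})$ to fall below $\delta_k$, after which your estimate $\int_{B_k}x_{n_k}\ge\varepsilon/4-\varepsilon/8$ goes through verbatim.
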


The notion of equi-integrability can be easily generalized to rearrangement invariant (r.i.) spaces $X$ on $[0,1]$ (for the definition and properties of these spaces, we refer the reader to the monographs \cite{KPS-IOLO,LT-II}). We shall say that a set $K\subset X$ has {\it equi-absolutely continuous norms} in $X$ if
$$
\lim_{\delta\to 0}\sup_{\mu(E)<\delta}\sup_{x\in K}\|x\chi_{E}\|_X=0.$$
An r.i. space $X$ is said to satisfy {\it Dunford-Pettis criterion of weak compactness (shortly $X\in (WDP)$)} if every relatively weakly compact subset of $X$ has equi-absolutely continuous norms in $X$. Observe that the converse assertion holds in every r.i. space.

In \cite{AKS}, it has been obtained a characterization of r.i. spaces satisfying Dunford-Pettis criterion of weak compactness.
Moreover, a special criterion has been proved there for Orlicz spaces. To state it, we need some definitions (for more detailed information on Orlicz spaces, see the monographs \cite{KrRu1958,LT-I,RaRe1991}).

Let $F$ be an Orlicz function, i.e., an increasing convex function on $[0,\infty)$ such that $F(0)=0.$
Denote by $L_F$ the Orlicz space on $[0,1]$ endowed with the Luxemburg norm
\[\|x\|_{L_F}:=\inf\{\lambda >0:\;\int\limits_{0}^{1}
F(|x(t)|/\lambda)dt\leq 1\}.\]
In particular, if $F(u)=u^p$, $1\le p<\infty$, we obtain the space $L_p$. An Orlicz space $L_F$ is separable if and only if the function $F$ satisfies the {\it $\Delta_2$-condition at infinity}, i.e., there are $C>0$ and $u_0>0$ such that $F(2u)\le CF(u)$ for all $u\ge u_0$. 

Denote by $\nabla_3$ the class of all Orlicz functions $F$ such that 
$$\lim_{t\to \infty}\frac
{\tilde{F}(Ct)}{\tilde{F}(t)}=\infty$$ 
for some $C>1$, where $\tilde{F}$ is the complementary function to $F$
defined by
$$
\tilde{F}(t):=\sup\{ts-F(s):\,s\ge 0\},\;\;t\ge 0.$$

The following theorem is a combination of \cite[Proposition~5.8]{AKS} and \cite[Theorem~3.4]{A-19} (see also Proposition~4.9 in  \cite{FHSTT}). 

\begin{theorem}
  \label{Th1}
Let $F$ be an Orlicz function on $[0,\infty)$. The following conditions are equivalent:

(a) $L_F\in (WDP)$; 

(b) either $L_F=L_1$, or $F\in \nabla_3$;  

(c) each normalized sequence of pairwise disjoint functions from $L_F$ contains a subsequence equivalent to the unit vector basis in $l_1$.

\end{theorem}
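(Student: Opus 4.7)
The plan is to establish the cyclic chain (a) $\Rightarrow$ (b) $\Rightarrow$ (c) $\Rightarrow$ (a). The easier implications (c) $\Rightarrow$ (a) and (b) $\Rightarrow$ (c) are handled by standard r.i.\ space techniques, while (a) $\Rightarrow$ (b) carries the main content and requires an explicit construction.

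For (c) $\Rightarrow$ (a) I would argue by contrapositive. If $L_F\notin (WDP)$, there exist a relatively weakly compact $K\subset L_F$, a constant $\varepsilon>0$, a sequence $x_n\in K$, and measurable sets $E_n$ with $\mu(E_n)\to 0$ and $\|x_n\chi_{E_n}\|_{L_F}\ge \varepsilon$. A gliding-hump extraction, using absolute continuity of the norm on $\mu$-small sets for each fixed $x_n$, produces a subsequence $y_k=x_{n_k}\chi_{A_k}$ with pairwise disjoint $A_k$ and $\|y_k\|_{L_F}\ge \varepsilon/2$. Hypothesis (c) then supplies a further subsequence of $(y_k/\|y_k\|_{L_F})$ equivalent to the unit vector basis of $l_1$. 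Transferring this equivalence back to $(x_{n_k})$ prevents $(x_{n_k})$ from having any weakly Cauchy subsequence, contradicting the relative weak compactness of $K$.

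For (b) $\Rightarrow$ (c), the case $L_F=L_1$ is immediate since pairwise disjoint $L_1$-functions are isometrically $l_1$-equivalent. Assume therefore $F\in\nabla_3$. By duality this translates into a quantitative growth property of $\tilde{F}$ which forces the lower Matuszewska-Orlicz index of $F$ at infinity to equal $1$. Given a normalized pairwise disjoint sequence $(z_n)$ in $L_F$, one estimates $\|\sum a_n z_n\|_{L_F}$ from below by testing against a suitable block of functionals in $L_{\tilde{F}}$ built from level sets of the $z_n$. The $\nabla_3$ condition yields a bound of the form $c\sum |a_n|$ with $c>0$ independent of the coefficients; combined with the trivial upper bound $\sum|a_n|$, this gives the desired $l_1$-equivalence after passing to a subsequence.

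The heart of the proof is (a) $\Rightarrow$ (b), which I handle by contrapositive: assuming $L_F\ne L_1$ and $F\notin\nabla_3$, I aim to exhibit a relatively weakly compact set $K\subset L_F$ without equi-absolutely continuous norms. Since $F\notin\nabla_3$ is equivalent to $\tilde{F}\in\Delta_2$ at infinity, natural candidates are functions $f_n=c_n\chi_{A_n}$ with $\mu(A_n)\to 0$ and $c_n=1/F^{-1}(1/\mu(A_n))$, so that $\|f_n\|_{L_F}=1$. The $\Delta_2$ property of $\tilde{F}$ enables one to verify, via an Orlicz-space version of the de la Vall\'ee-Poussin criterion, that $\int|f_n g|\,dt$ is uniformly small for $g$ in the unit ball of $L_{\tilde{F}}$ supported on sets of small measure, yielding relative weak compactness of $K=\{f_n\}$. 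By construction $\|f_n\chi_{A_n}\|_{L_F}=1$ while $\mu(A_n)\to 0$, so $K$ fails equi-absolute continuity. I expect the main obstacle to lie in the interplay of the two hypotheses: the condition $L_F\ne L_1$ is essential to separate the $c_n$-scaling from pure $L_1$-behaviour, while $F\notin\nabla_3$ is precisely what makes the weak-compactness verification go through without the $f_n$ degenerating into an $l_1$-sequence.
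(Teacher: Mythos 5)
A preliminary remark: the paper does not actually prove Theorem~\ref{Th1}; it is quoted as a combination of \cite[Proposition~5.8]{AKS}, \cite[Theorem~3.4]{A-19} and \cite[Proposition~4.9]{FHSTT}, so there is no in-paper argument to compare yours with. Your outlines of (c)$\,\Rightarrow\,$(a) and (b)$\,\Rightarrow\,$(c) are broadly consistent with what is done in those references, with one caveat: in (b)$\,\Rightarrow\,$(c) the lower $l_1$-estimate must genuinely use the $\nabla_3$ condition and not merely the index condition $\alpha_F^{\infty}=\beta_F^{\infty}=1$ that it implies, since the entire point of the present paper is that the index condition is strictly weaker.

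The implication you identify as the heart, (a)$\,\Rightarrow\,$(b), rests on a false equivalence: you assert that $F\notin\nabla_3$ is the same as $\tilde F\in\Delta_2$ at infinity. By the paper's definition, $F\in\nabla_3$ means $\tilde F(Ct)/\tilde F(t)\to\infty$ for some $C>1$; its negation says only that for every $C>1$ one has $\liminf_{t\to\infty}\tilde F(Ct)/\tilde F(t)<\infty$, which is far weaker than the uniform bound $\limsup_{t\to\infty}\tilde F(2t)/\tilde F(t)<\infty$ defining $\Delta_2$. The function $\Phi$ constructed in this very paper refutes your equivalence: $\beta_\Phi^{\infty}=1$ forces $\alpha_{\tilde\Phi}^{\infty}=\infty$ by the standard index duality, so $\tilde\Phi$ grows faster than every power and certainly fails $\Delta_2$, and yet $\Phi\notin\nabla_3$. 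With that equivalence gone, your de la Vall\'ee--Poussin verification of the relative weak compactness of $K=\{f_n\}$ has no hypothesis to run on; moreover, equi-integrability of $\{f_n g\}$ against small-support elements of the unit ball of $L_{\tilde F}$ is an $L_1$-type criterion and does not by itself yield weak convergence of $(f_n)$ in $L_F$, which is what relative weak compactness of $K\cup\{0\}$ actually requires. The argument that works instead uses the oscillation permitted by the failure of $\nabla_3$, together with the compactness of $E_F^{\infty}$, to extract a normalized pairwise disjoint sequence $(f_n)$ spanning an Orlicz sequence space $l_N$ with $N$ not equivalent to $t$ at zero; a symmetric basic sequence not equivalent to the unit vector basis of $l_1$ is weakly null, so $\{f_n\}\cup\{0\}$ is weakly compact while $\|f_n\chi_{\supp f_n}\|_{L_F}=1$ and $\mu(\supp f_n)\to 0$, which defeats (WDP).
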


\begin{remark}
This result extends also to the Orlicz-Lorentz spaces $L_{F,w}$, where $F$ is an Orlicz function and $w$ is an increasing nonnegative function on $[0,1]$, $w\in L_1$ \cite{AS-20}.
\end{remark}

To clarify the condition (c) of Theorem~\ref{Th1}, we proceed with some more definitions. 

An r.i. space $X$ is called {\it disjointly homogeneous} (shortly DH) if two arbitrary normalized disjoint sequences in $X$ contain equivalent subsequences. In particular, given $1\le p\le\infty$, an r.i. space $X$ is  {\it $p$-disjointly homogeneous} (shortly p-DH) if each  normalized disjoint sequence in $X$ has a subsequence equivalent to the unit vector basis of $l_p$ ($c_0$ when $p=\infty$). These notions were first introduced in \cite{FTT-09} and proved to be very useful in studying the general problem of identifying r.i. spaces $X$ such that the ideals of strictly singular and compact operators bounded in $X$ coincide \cite{bib:04} (see also survey \cite{FHT-survey} and references therein).

Let $M$ be an Orlicz function. We define the following subsets of the space $C[0, \frac{1}{2}]$ of continuous functions on $[0, \frac{1}{2}]$:

$$
E_{M, A}^{\infty} := \overline{\big\{ G(x) = \frac{M(xy)}{M(y)} \ : y > A > 0 \big\}},
$$
$$
E_{M}^{\infty}: = \bigcap_{A > 0}E_{M, A}^{\infty}, \ \ C_{M}^{\infty}: = \overline{conv E_{M}^{\infty}},
$$
where the closure is taken in $C[0, \frac{1}{2}].$ All these sets are nonempty and compact in $C[0, \frac{1}{2}]$  \cite[Lemma~4.a.6]{LT-I}. It is well known that they largely determine properties of sequences of pairwise disjoint functions in Orlicz spaces on $[0,1]$ (see \cite[\S\,4.a.]{LT-I}, \cite{LTIII}).

The following characterization of DH Orlicz spaces has been proved in \cite{bib:04} (see Theorem~4.1). We write $E_{M}^{\infty}\cong \{ F\}$ if all functions from the set $E_{M}^{\infty}$ are equivalent at zero to the function $F$. 

\begin{theorem}
  \label{Th2}
Let $M$ be an Orlicz function on $[0,\infty)$. The Orlicz space $L_M$ is DH if and only if there is a function $F$ such that $E_{M}^{\infty}\cong \{ F\}$. Moreover, $L_M$ is $DH$ if and only if it is $p$-DH for some $1\leq p \leq \infty$. Then, $E_{M}^{\infty} \cong \{ t^{p} \}$ for $1 \leq p < \infty$, and $E_{M}^{\infty} \cong \{ F_{0} \}$ for $p = \infty$, where $F_{0}$ is a generate Orlicz function.
\end{theorem}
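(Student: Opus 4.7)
My plan is to reduce questions about arbitrary normalized disjoint sequences in $L_M$ to ones about unit vector bases of Orlicz sequence spaces $\ell_F$ with $F\in E_M^\infty$. The main tool is the block-basis representation from \cite[\S\,4.a]{LT-I}: given a normalized disjoint sequence $(u_n)$ in $L_M$, after passing to a subsequence one can find $y_n\to\infty$ and pairwise disjoint measurable sets $A_n\subset[0,1]$ such that $(u_n)$ is equivalent to the sequence $v_n:=y_n\chi_{A_n}/\|y_n\chi_{A_n}\|_{L_M}$, and $(v_n)$ is equivalent to the unit vector basis of $\ell_F$ for some $F\in E_M^\infty$. Conversely, every $F\in E_M^\infty$ is realized this way: pick $y_n\to\infty$ with $M(\cdot\,y_n)/M(y_n)\to F$ uniformly on $[0,1/2]$ and take $A_n$ of measure $1/M(y_n)$.

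For the first equivalence I would argue as follows. Sufficiency: if $E_M^\infty\cong\{F\}$, then every normalized disjoint sequence has a subsequence equivalent to the unit vector basis of $\ell_F$, so any two have equivalent subsequences and $L_M$ is DH. Necessity: assume $L_M$ is DH and suppose, for contradiction, that $F_1,F_2\in E_M^\infty$ are not equivalent at $0$. The realization construction above then yields normalized disjoint sequences in $L_M$ equivalent respectively to the bases of $\ell_{F_1}$ and $\ell_{F_2}$; these cannot contain equivalent subsequences, since the canonical bases of $\ell_{F_1}$ and $\ell_{F_2}$ are inequivalent whenever $F_1,F_2$ fail to be equivalent at $0$---contradicting DH.

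For the ``moreover'' part, assuming $E_M^\infty\cong\{F\}$, the goal is to show that $F$ is equivalent at $0$ to a power $t^p$, or (when $p=\infty$) to a degenerate Orlicz function $F_0$. The key is a composition-closure property of $E_M^\infty$: if $y_n\to\infty$ realizes $F$ and $z\in(0,1/2]$, then $\tilde y_n:=y_n z\to\infty$ gives
$$
\lim_{n\to\infty}\frac{M(x\tilde y_n)}{M(\tilde y_n)}=\lim_{n\to\infty}\frac{M(xz\,y_n)/M(y_n)}{M(z\,y_n)/M(y_n)}=\frac{F(xz)}{F(z)},
$$
so $x\mapsto F(xz)/F(z)$ belongs to $E_M^\infty$. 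Under $E_M^\infty\cong\{F\}$ this forces $F(xz)$ and $F(x)F(z)$ to be equivalent up to multiplicative constants uniformly in $x,z\in(0,1/2]$. A standard argument (in the spirit of Matuszewska-Orlicz) then identifies $F$ up to equivalence at $0$ with $t^p$ for a unique $p\in[1,\infty)$, or with the degenerate $F_0$ corresponding to $p=\infty$.

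The main obstacle will be the uniform control in the block-basis reduction: delicate choices of supports $A_n$ and careful tracking of the Luxemburg norms are needed so that the block sequence is genuinely equivalent to the unit vector basis of $\ell_F$ and not of some weighted/averaged variant; here the fact that $E_M^\infty=\bigcap_{A>0}E_{M,A}^\infty$ and the compactness of each $E_{M,A}^\infty$ in $C[0,1/2]$ are used to extract the required limit function $F$. A secondary subtlety in the ``moreover'' part is the passage from the approximate multiplicativity $F(xz)\asymp F(x)F(z)$ to a genuine power law, which proceeds via the logarithmic change of variable and invokes the theory of regularly varying Orlicz functions; the degenerate limit $F_0$ appears precisely when the upper Matuszewska-Orlicz index of $M$ is $\infty$.
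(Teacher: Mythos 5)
First, a point of reference: the paper does not prove this statement at all --- Theorem~\ref{Th2} is quoted from Flores, Hernandez, Semenov, and Tradacete \cite[Theorem~4.1]{bib:04}. So your sketch can only be compared with the argument given there; in broad outline you are following the same Lindenstrauss--Tzafriri machinery, but there is a genuine gap in your first reduction.

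It is not true that an arbitrary normalized disjoint sequence $(u_n)$ in $L_M$ is, after passing to a subsequence, equivalent to a ``single-level'' sequence $v_n=y_n\chi_{A_n}/\|y_n\chi_{A_n}\|_{L_M}$ modelled on some $F\in E_M^\infty$: a function $u_n$ taking two very different values on two parts of its support produces in the limit a convex combination of two elements of $E_M^\infty$. The correct representation theorem (see \cite{LTIII}, \cite[\S\,4.a]{LT-I}, and the proof in \cite{bib:04}) says that every normalized disjoint sequence has a subsequence equivalent to the unit vector basis of $\ell_F$ for some $F\in C_M^\infty$ (the closed convex hull), while the single-level sequences realize exactly $E_M^\infty$. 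Your necessity direction survives, since it only uses the realization of elements of $E_M^\infty$; but your sufficiency direction has a hole: from $E_M^\infty\cong\{F\}$ you must deduce $C_M^\infty\cong\{F\}$, and for that the equivalence constants between members of $E_M^\infty$ and $F$ must be \emph{uniform} over the compact set $E_M^\infty$ --- this uniformity, obtained by a compactness/Baire-type argument as in \cite[\S\,4.a]{LT-I}, is exactly what allows the equivalence to pass to convex combinations and their closure. The same unproved uniformity is invoked again in your ``moreover'' part, where you assert that $F(xz)\asymp F(x)F(z)$ with constants independent of $z\in(0,1/2]$; once it is supplied (and the degenerate case, where $F(z)=0$ for small $z$ and the composition argument breaks down, is treated separately), the dichotomy $F\sim t^p$ or $F\sim F_0$ does follow by the standard Matuszewska--Orlicz argument, and the ``$p$-DH for some $p$'' clause is then immediate.
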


Clearly, the condition $M\in \nabla_3$ means that $M(t)$ is in a sense close to the function $H(t):=t$ (resp. the Orlicz space $L_M$ is  located "close"\:to $L_1$). In a different way, the closeness of $M$ to $H$ can be expressed by using the well-known Matuszewska-Orlicz indices $\alpha_{M}^{\infty}$ and $\beta_{M}^{\infty}$ defined by 
$$
\alpha_{M}^{\infty}: = \sup \big\{ p : \sup_{x, y \geq 1} \frac{M(x)y^{p}}{M(xy)} < \infty \big\}, \ \ \ \beta_{M}^{\infty}: = \inf \big\{ p : \inf_{x, y \geq 1} \frac{M(x)y^{p}}{M(xy)} > 0 \big\}.
$$
One can easily check that $1 \leq \alpha_{M}^{\infty} \leq \beta_{M}^{\infty} \leq \infty$. Moreover, $t^{p} \in C_{M}^{\infty},$ where $1 \leq p < \infty$, if and only if $p \in [\alpha_{M}^{\infty}, \beta_{M}^{\infty}]$ (see \cite[Proposition~5.3]{KamRy}). 

It can be easily proved that from the condition $\varphi \in \nabla_{3}$ it follows that $\beta_{\varphi}^{\infty} = 1$ (see \cite[Proposition~10]{LesnikMT}). For the reader's convenience, we present here a very simple argument showing this. 

Assume that $\varphi \in \nabla_{3}$. Applying Theorems~\ref{Th1} and \ref{Th2}, we have $E_{\varphi}^{\infty}\cong \{ t\}$, and hence $C_{\varphi}^{\infty}\cong \{ t\}$. Therefore, $C_{\varphi}^{\infty}$ does not contain the functions $t^{p}$ for $p \neq 1.$ Consequently, according to the above-mentioned result \cite[Proposition~5.3]{KamRy}, the interval $[\alpha_{\varphi}^{\infty}, \beta_{\varphi}^{\infty}]$ consists only of one point, i.e., $\alpha_{\varphi}^{\infty} = \beta_{\varphi}^{\infty} = 1$.  

Recently, in \cite{LesnikMT}, Le\'{s}nik, Maligranda, and Tomaszewski, using somewhat different notation, asked whether the converse holds, i.e., does $\beta_{\varphi}^{\infty} = 1$ imply $\varphi \in \nabla_{3}$ ? The main purpose of this note is to give the negative answer to this question. 

\begin{theorem}\label{120120205}
There exists an Orlicz function $\Phi$ such that $\beta_{\Phi}^{\infty} = 1$ but $\Phi \notin \nabla_{3}.$ 
\end{theorem}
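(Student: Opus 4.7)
By Theorem~\ref{Th1} combined with Theorem~\ref{Th2}, the condition $\Phi\notin\nabla_3$ is equivalent to $E_\Phi^\infty$ containing a function not equivalent at zero to $t$. My plan is to construct $\Phi$ so that along a suitable sequence $y_n\to\infty$ the quotients $\Phi(\,\cdot\,y_n)/\Phi(y_n)$ converge uniformly on $[0,1/2]$ to the specific function
\[
G(x)\;:=\;\frac{x}{1+\log(1/x)}.
\]
This $G$ is convex, satisfies $G\le t$, and $G(x)/x\to 0$ as $x\to 0^+$, so $G$ is not equivalent at zero to $t$, which forces $\Phi\notin\nabla_3$ by Theorem~\ref{Th1}. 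A second, cruder choice of sequence will yield $t$ itself as an element of $E_\Phi^\infty$, so that $t\in C_\Phi^\infty$ and $\alpha_\Phi^\infty=1$; the construction will be tuned to give $\beta_\Phi^\infty=1$ as well.

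Concretely, fix a very rapidly increasing sequence such as $y_n=\exp(3^n)$ and constants $c_n>0$. On each interval $[y_n^{1/2},y_n]$ define
\[
\Phi(s)\;:=\;\frac{c_n\,s}{\log(e\,y_n/s)},
\]
and on each intervening interval $[y_n,y_{n+1}^{1/2}]$ take $\Phi$ linear with slope $\Phi'(y_n)=2c_n$. Matching values at $y_{n+1}^{1/2}$ then fixes (approximately) the recursion $c_{n+1}\sim c_n\log y_{n+1}$. A direct differentiation gives $\Phi''>0$ on each log-segment, and the slopes are non-decreasing across junctions, so $\Phi$ is convex and increasing with $\Phi(0)=0$; the $\Delta_2$-condition at infinity is a routine check, so $L_\Phi$ is separable. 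For $y=y_n$ and $xy_n\in[y_n^{1/2},y_n]$ one computes directly
\[
\frac{\Phi(xy_n)}{\Phi(y_n)}\;=\;\frac{c_nxy_n/\log(e/x)}{c_ny_n}\;=\;\frac{x}{1+\log(1/x)}\;=\;G(x),
\]
and for $x\in[0,y_n^{-1/2}]$ both $\Phi(xy_n)/\Phi(y_n)$ and $G(x)$ are of size $O(1/\log y_n)\to 0$, yielding uniform convergence to $G$ on $[0,1/2]$ and thus $G\in E_\Phi^\infty$. Running an analogous computation with $y_n':=y_{n+1}^{1/2}/2$, chosen deep inside a linear segment, gives uniform convergence of $\Phi(\,\cdot\,y_n')/\Phi(y_n')$ to $x$, so $t\in E_\Phi^\infty$.

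The main obstacle is verifying $\beta_\Phi^\infty=1$, i.e.\ that for every $p>1$ there is a constant $C(p)$ with $\Phi(t)/\Phi(s)\le C(p)(t/s)^p$ for all $t\ge s\ge 1$. Writing $\rho(t):=\Phi(t)/t$, this is equivalent to $\rho(t)/\rho(s)\le C(p)(t/s)^{p-1}$. The function $\rho$ increases by at most a factor $\sim\log y_{n+1}$ when $t$ passes from the scale $y_n^{1/2}$ up to the scale $y_{n+1}$, whereas $(t/s)^{p-1}$ over the same scale change is bounded below by a positive power of $y_{n+1}$, which dominates $\log y_{n+1}$ for $n$ large. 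A case analysis splitting according to whether $s$ and $t$ belong to log-segments or linear segments then yields the required inequality with a constant depending only on $p$, uniformly in $n$. Combined with the general bound $\beta_\Phi^\infty\ge 1$ recorded in the excerpt, this gives $\beta_\Phi^\infty=1$ and completes the proof.
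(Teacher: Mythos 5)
Your proposal is correct in outline and in its key computations, but it reaches the goal by a genuinely different construction than the paper. The reduction is the same: both arguments use Theorems~\ref{Th1} and \ref{Th2} to replace $\Phi\notin\nabla_3$ by the requirement that $E_\Phi^\infty$ contain a function not equivalent to $t$ at zero, and then separately verify $\beta_\Phi^\infty=1$. The constructions diverge from there. The paper sets $F(x)=2^{f(\log_2 x)}$ with $f'=\phi\in\{1,2\}$, where the ``slope~$2$'' blocks $(2^n-2^{\sqrt n},2^n]$ are sparse, and takes $\Phi(x)=\int_0^x F(t)t^{-1}\,dt$; convexity then falls out of the pointwise inequality $F(xy)\ge xF(y)$, the index bound comes from a four-case measure estimate of the sets where $\phi=2$ versus $\phi=1$, and the bad element of $E_\Phi^\infty$ is an \emph{unidentified} limit $N$ extracted by compactness of $E_{\Phi,A}^\infty$ and pinned down only through $N(2^{-2^m})\le c^{-1}2^{-2^{m+1}}$. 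You instead glue explicit pieces $c_n s/\log(ey_n/s)$ and linear segments, which buys you an explicit limit $G(x)=x/(1+\log(1/x))$ by direct computation (indeed $\Phi(xy_n)/\Phi(y_n)=G(x)$ \emph{exactly} for $x\in[y_n^{-1/2},1/2]$), with no compactness argument needed; the price is that convexity is no longer automatic and must be checked at the junctions, which forces the recursion $c_{n+1}\sim c_n\log y_{n+1}$ you state. I verified the slope computations: $\Phi'(s)=c_n(2+\log(y_n/s))/(1+\log(y_n/s))^2$ is increasing on each log-segment, equals $2c_n$ at $s=y_n$, and the exact value-matching at $y_{n+1}^{1/2}$ does make the right slope exceed the left one, so the construction is consistent. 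Your $\beta_\Phi^\infty$ mechanism is also sound — $\rho(t)=\Phi(t)/t$ grows only polylogarithmically in $y_{n+1}$ per period while the scale grows like a power of $y_{n+1}$, and the worst within-segment case reduces to the elementary bound $(1+b)/(1+a)\le C_p e^{(p-1)(b-a)}$ — though you assert rather than carry out this case analysis, and you leave the definition of $\Phi$ near the origin implicit (a linear extension works). The auxiliary claim $t\in E_\Phi^\infty$ is correct but unnecessary, since $\alpha_\Phi^\infty\ge 1$ is already recorded in the paper.
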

\begin{proof}

First of all, we observe that the theorem will be proved once we construct an Orlicz function $\Phi$ with the properties that $\beta_{\Phi}^{\infty} = 1$ and the set $E_{\Phi}^{\infty}$ contains a function, not equivalent at zero to the function $H$. Indeed, then by Theorem~\ref{Th2} the Orlicz space $L_{\Phi}$ fails to be a 1-DH space, and so from Theorem~\ref{Th1} it follows that $\Phi \notin \nabla_{3}$.

Let us introduce some auxiliary functions. We set $\phi(t)= 1$ if $0 < t \leq 4,$ and for $n = 3, 4 \dots$ 
\begin{equation*}
\phi(t):= 
\begin{cases}
1 \ \ \ \text{ if } 2^{n-1} < t \leq 2^{n} - 2^{\sqrt n}\\
2 \ \ \ \text{ if } 2^{n} - 2^{\sqrt n} < t \leq 2^{n}.
\end{cases}
\end{equation*}
Furthermore, $f(x) := \int\limits_{0}^{x}\phi(t)\,dt$, $x \geq 0$, and
\begin{equation*}
F(x):= 
\begin{cases}
2^{f(\log_{2}x)} \ \ \ \text{ for } x \geq 1 \\
x \ \ \ \ \ \ \ \ \ \ \text{ for } 0 < x <1.
\end{cases}
\end{equation*}
Then, the function $\Phi$ that we need is defined by
$$
\Phi(x) := \int\limits_{0}^{x}\frac{F(t)}{t}\,dt.
$$
Let us check that $\Phi$ is an Orlicz function on $[0,\infty)$. One can easily see that it suffices to show that the function ${F(y)}/{y}$ increases for $y > 0$, or equivalently 
\begin{equation}\label{24112019}
\frac{F(xy)}{F(y)} \geq x\;\;\mbox{for all}\;\; x \geq 1, y > 0.
\end{equation}
Suppose first that $y \geq 1.$ From the definitions of the functions $f$ and $F$ it follows 
$$
\frac{F(xy)}{F(y)} = 2^{f(\log_{2}x + \log_{2}y) - f(\log_{2}y)}
$$
and
$$
f(\log_{2}x + \log_{2}y) - f(\log_{2}y) = \int\limits_{\log_{2}y}^{\log_{2}x + \log_{2}y}\phi(t)dt \geq \log_{2}x.
$$
Hence, $\eqref{24112019}$ is proved for $y\ge 1$. Let now  $0 < y < 1$. Then, if $xy < 1$, we have $F(xy) = xy = xF(y)$. Otherwise, in the case when $xy > 1$,  
$$
f(\log_{2}(xy)) = \int\limits_{0}^{\log_{2}(xy)}\phi(t)dt \geq \log_{2}(xy),
$$
whence
$$
F(xy) = 2^{f(\log_{2}(xy))} \geq xy = xF(y).
$$
Thus, inequality $\eqref{24112019}$ is established, and so  $\Phi$ is an Orlicz function.

The next our goal is to prove that the upper Matuszewska-Orlicz index $\beta_{\Phi}^{\infty}$ is equal to $1.$ To this end, we are going to show that for every $p > 1$ there is a constant $C_{p} > 0$ such that for all $x, y \geq 1$
\begin{equation}\label{241120191}
\frac{F(xy)}{F(y)} \leq C_{p}x^{p}.
\end{equation}

Note that it suffices to check inequality \eqref{241120191} only for "large"\:values of $x$ and $y$. In fact, let us assume that \eqref{241120191} is already proved in the case when $x,y \ge K > 1$. Then, for all $x,y\ge 1$
$$
f(\log_{2}y + \log_{2}x) - f(\log_{2}y) = \int\limits_{\log_{2}y}^{\log_{2}x + \log_{2}y}\phi(t)\,dt \leq 2\log_{2}x = \log_{2}x^{2},
$$
and, if additionally $1 \leq x \leq K$, we have  
$$
\frac{F(xy)}{F(y)} \leq 2^{\log_{2}x^{2}} = x^{2} \leq K^{2}x^{p}.
$$
In the remaining case when $x \geq K$ and $1 \leq y \leq K$, taking into account that $F(y)$ increases, $F(0)=1$ and using the hypothesis, we get
$$
\frac{F(xy)}{F(y)} \leq F(K) \frac{F(xK)}{F(K)} \leq F(K)C_{p}x^{p}.
$$
As a result, inequality \eqref{241120191} holds for all $x, y \geq 1$ (possibly with a larger constant $C_p$). Next, we shall prove \eqref{241120191} for "large"\:values of $x$ and $y$.

Let $0<\varepsilon \le (p-1)/{2}$ be fixed. It is easy to see that there exists $n_0\in\mathbb{N}$ such that for all  $n, m \in \mathbb{N}$, $m-1>n\ge n_0$ the following inequality holds:
\begin{equation}\label{27112019}
\sum_{i=n}^{m-1}2^{\sqrt i} < \varepsilon \sum_{i=n+1}^{m-1}\big(2^{i-1} - 2^{\sqrt i}\big),
\end{equation}
Then, assuming that $x,y\ge 2^{2^{n_0}}$, we can find $n, m \in \mathbb{N}$, $m-1>n\ge n_0$, satisfying the inequalities $2^{n-1} < \log_{2}y \leq 2^{n},$ $2^{m-1} < \log_{2}y + \log_{2}x \leq 2^{m}$. In particular, for the chosen $n$ and $m$ we have \eqref{27112019}.

We set
$$
A: = \big \{ t \in [\log_{2}y, \log_{2}y + \log_{2}x]: \phi(t) = 2 \big\}
$$
and
$$
B := \big \{ t \in [\log_{2}y, \log_{2}y + \log_{2}x]: \phi(t) = 1 \big\}.
$$
There are four possible different arrangements of the numbers $\log_{2}y$ and $\log_{2}y + \log_{2}x$ inside the intervals $[2^{n-1}, 2^{n}]$ and $[2^{m-1}, 2^{m}]$, respectively. Let us consider them successively.

If $2^{n-1} < \log_{2} y \leq 2^{n} - 2^{\sqrt n}$ and $2^{m} - 2^{\sqrt m} \leq \log_{2}y + \log_{2}x \leq 2^{m}$, then by the definition of $\phi$ we get
$$
\mu(A) = \sum_{i=n}^{m-1}2^{\sqrt i} + (\log_{2}y + \log_{2}x - 2^{m} + 2^{\sqrt m}) \leq \sum_{i = n}^{m}2^{\sqrt i},
$$
while
$$
\mu(B) = (2^{n} - 2^{\sqrt n} - \log_{2}y) + \sum_{i = n+1}^{m}(2^{i -1} - 2^{\sqrt i}) \geq \sum_{i = n+1}^{m}(2^{i-1} - 2^{\sqrt i}).
$$
In the case when $2^{n} - 2^{\sqrt n} < \log_{2} y < 2^{n}$ and $2^{m-1} < \log_{2}x + \log_{2}y<2^{m} - 2^{\sqrt m}$, we have
$$
\mu(A)= (2^{n} - \log_{2}y) +\sum_{i=n+1}^{m-1}2^{\sqrt i}\leq \sum_{i=n}^{m-1}2^{\sqrt i}
$$
and
$$
\mu(B) =  \sum_{i = n+1}^{m-1} (2^{i-1} - 2^{\sqrt i})+ (2^{m} - 2^{\sqrt m}-\log_{2}x - \log_{2}y )\geq \sum_{i = n+1}^{m-1} (2^{i-1} - 2^{\sqrt i}).
$$
Similarly, if $2^{n} - 2^{\sqrt n} < \log_{2} y < 2^{n}$ and $2^{m} - 2^{\sqrt m} \leq \log_{2}y + \log_{2}x \leq 2^{m}$, 
$$
\mu(A)\leq \sum_{i = n}^{m}2^{\sqrt i}\;\;\mbox{and}\;\;\mu(B)\geq \sum_{i = n+1}^{m}(2^{i-1} - 2^{\sqrt i}),
$$ 
and if $2^{n-1} < \log_{2} y \leq 2^{n} - 2^{\sqrt n}$ and $2^{m-1} < \log_{2}x + \log_{2}y<2^{m} - 2^{\sqrt m}$, then
$$
\mu(A)= \sum_{i = n}^{m-1}2^{\sqrt i}\;\;\mbox{and}\;\;\mu(B)\geq \sum_{i = n+1}^{m-1}(2^{i-1} - 2^{\sqrt i}).
$$

From the estimates obtained and inequality $\eqref{27112019}$ it follows that
\begin{equation}\label{271120191}
\mu(A) < \varepsilon \mu(B).
\end{equation}
Since $\mu(B) \le \log_{2}x$, then combining \eqref{271120191} with the definition of $\phi$, we get
\begin{eqnarray*}
f(\log_{2}y + \log_{2}x) - f(\log_{2}y) 
&=& 
\int\limits_{\log_{2}y}^{\log_{y} + \log_{2}x}\phi(t)\,dt = \int\limits_{A}\phi(t)\,dt + \int\limits_{B}\phi(t)\,dt
\\ &\leq& 2\varepsilon \mu(B) + \mu(B) \leq (1 + 2\varepsilon)\log_{2}x,
\end{eqnarray*}
and hence, by the choice of $\varepsilon$,
$$
\frac{F(xy)}{F(y)}= 2^{f(\log_{2}x + \log_{2}y) - f(\log_{2}y)} \leq 2^{(1+2\varepsilon)\log_{2}x} = x^{1 + 2\varepsilon} \leq x^{p}
$$
for all $x,y\ge 2^{2^{n_0}}$. Thus, taking into account the above observation, we obtain $\eqref{241120191}$ for all $x,y\ge 1$.

Further, let us check that for some $c > 0$ we have
\begin{equation}\label{271120194}
c F(x) \leq \Phi(x) \leq F(x),\;\;x>0.
\end{equation}

The right-hand side inequality in \eqref{271120194} is an immediate consequence of the definition of  $\Phi$ and the fact that ${F(y)}/{y}$ is an increasing function for $y > 0$. On the other hand, 
\begin{equation*}\label{241120193}
\Phi(x) \geq \int\limits_{{x}/{2}}^{x}{F(t)}/{t}\,dt \geq F({x}/{2}).
\end{equation*}
Moreover, from inequality \eqref{241120191} for $x = 2$ and $y \geq 1$ it follows that
$$
F(x) \leq C_{p}2^{p}F({x}/{2}),\;\;x\ge 2.
$$
Since $F(x) = x$ if $0<x< 16$ and $C_p\ge 1$, then this inequality holds for all $x>0$. Combining the last estimates, we arrive at the left-hand side inequality in \eqref{271120194}.

It is easy to see that from inequalities \eqref{271120194}, \eqref{241120191}, and the definition of the upper Matuszewska-Orlicz index it follows that $\beta_{\Phi}^{\infty} = 1.$

It remains to show that the set $E_{\Phi}^{\infty}$ contains a function that is not equivalent at zero to the function $H(t)=t$. 

Let $m \in \mathbb{N}$ be fixed. Then we have $
2^{n} - 2^{\sqrt n} < 2^{n}- 2^{m}$ whenever $n > m^{2}$. Hence, $\phi(t) =2$ for all $2^{n} - 2^{m}\le t\le 2^{n}$. Consequently,
$$
\log_{2}\left(\frac{F(2^{-2^{m}} \cdot 2^{2^{n}})}{F(2^{2^{n}})}\right)
= f(2^{n} - 2^{m}) - f(2^{n}) = - \int\limits_{2^{n} - 2^{m}}^{2^{n}}\phi(t)\,dt =- 2 \cdot 2^{m},
$$
whence
\begin{equation}\label{271120192}
\frac{F(2^{-2^{m}}\cdot2^{2^{n}})}{F(2^{2^{n}})} = \Big( 2^{-2^{m}} \Big)^{2}.
\end{equation}

From inequality $\eqref{271120194}$ and equation $\eqref{271120192}$ we get the following estimate for the points $t_{n}: = 2^{2^{n}}$, $n \geq m^{2}$:
\begin{equation}\label{271120195}
\frac{\Phi(2^{-2^{m}}t_{n})}{\Phi(t_{n})} \leq c^{-1}\frac{F(2^{-2^{m}}t_{n})}{F(t_{n})} = c^{-1}\Big( 2^{-2^{m}} \Big)^{2}.
\end{equation}
Consider now the functions 
$$
\Phi_{n} (x) := \frac{\Phi(xt_{n})}{\Phi(t_{n})}, n = 1, 2, \dots,
$$
Clearly, $\Phi_{n}\in E_{\Phi,A_m}^{\infty}$, $n\ge m^2$, where $A_m:= 2^{2^{m^2}}$. Since $E_{\Phi, A_m}^{\infty}$ is a compact set in $C[0,\frac12]$, there exists an increasing subsequence of positive integers $\{ n_{k} \}_{k=1}^{\infty}$ such that $\Phi_{n_{k}} (x) $ uniformly converges on $[0,\frac12]$ to some function $N(x).$ One can easily to see that $N$ belongs to the set $E_{\Phi,A}^{\infty}$ for each $A>0$, which implies that $N \in E_{\phi}^{\infty}$. In addition, in view of inequality \eqref{271120195}, we have for all $m = 1, 2, \dots$
$$
N(2^{-2^{m}}) \leq c^{-1}\Big( 2^{-2^{m}} \Big)^{2}.
$$
It is immediately follows from this estimate that $N(t)$ is not equivalent at zero to the function $H$. Summarizing everything, we complete the proof. 
\end{proof}


\begin{thebibliography}{10}

\bibitem{AK}
F.~Albiac and N.~J.~Kalton, \textit{Topics in Banach space theory.} Springer, New York, 2006.

\bibitem {A-19} S.~V.~Astashkin, {\it Rearrangement invariant spaces satisfying Dunford-Pettis criterion of weak compactness,}
 Contemp. Math. {\bf 733} (2019), 45--59.

\bibitem {AKS} S.~V.~Astashkin, N.~Kalton, and F.~A.~Sukochev, \textit{ Cesaro mean convergence of martingale differences in rearrangement invariant spaces}, Positivity, \textbf{12} (2008), 387--406.

\bibitem{AS-20}
S.~V. Astashkin and S.~I. Strakhov, \textit{On disjointly homogeneous Orlicz-Lorentz spaces} (submitted).

\bibitem{DF-1940} N. Dunford and B.J. Pettis, \textit{Linear operations on summable functions}, Trans. Amer. Math. Soc. \textbf{47} (1940),  323--392.

\bibitem{bib:04}
J.~Flores, F.~L. Hernandez, E.~M.~Semenov, and P. Tradacete, {\it Strictly singular and power-compact operators on Banach lattices,} Israel J. Math. {\bf 188}(2012), 323--352.


\bibitem{FHSTT}
J. Flores, F.~L.~Hernandez, E.~Spinu, P.~Tradacete, and V.~G.~Troitsky, \textit{Disjointly homojeneous Banach lattices: Duality and complementation}, J. Funct. Anal. \textbf{266}:9 (2014), 5858--5885.

\bibitem{FHT-survey}
J.~Flores, F.~L.~Hernandez, and P.~Tradacete, {\it Disjointly homogeneous Banach lattices and applications.} Ordered Structures and Applications: Positivity VII. Trends in Mathematics, Springer, 179--201 (2016).

\bibitem{FTT-09} J.~Flores, P.~Tradacete and V.~G.~Troitsky, {\it Disjointly homogeneous Banach lattices and compacrt products of operators,} J. Math. Anal. Appl. {\bf 354}(2009),  657--663.


\bibitem{KamRy} A.~Kaminska and Y.~Raynaud, {\it Isomorphic copies in the lattice $E$ and its symmetrization $E^{(*)}$ with applications to Orlicz-Lorentz spaces}, J. Funct. Anal. {\bf 257} (2009), 271--331.

%\bibitem{KamRy} A.~Kaminska, Y.~Raynaud, {\it Isomorphic $l_{p}$-subspaces in Orlicz-Lorentz sequence spaces}, Proc. Amer. Math. Soc. {\bf 134}:8 (2006), 2317--2327.

\bibitem{KrRu1958}
M.~A.~Krasnosel'skii and Ya.~B.~Rutickii,
\textit{Convex Functions and Orlicz Spaces,} Noordhoff Ltd., Groningen, 1961.

\bibitem{KPS-IOLO}
S.~G. Krein, Y.~I.~Petunin, and E.~M.~Semenov, \textit{Interpolation of linear operators}, Translations of Mathematical Monographs, vol.~54, American Mathematical Society, Providence, R.I., 1982.

\bibitem{LesnikMT} K.~Le\'{s}nik, L.~Maligranda and J.~Tomaszewski, {\it Weakly compact sets and weakly compact pointwise multipliers in Banach function lattices,} https://arxiv.org/abs/1912.08164v1[math.FA] 17 Dec. 2019. 

\bibitem{LTIII} J.~Lindenstrauss and L.~Tzafriri, {\it On Orlicz sequence spaces, III}, Israel Math. J. {\bf 14} (1973), 368--389.

\bibitem{LT-I}
J.~Lindenstrauss and L.~Tzafriri, \textit {Classical Banach spaces. I, Sequence Spaces,} Springer-Verlag, Berlin, 1977.

\bibitem{LT-II}
J.~Lindenstrauss and L.~Tzafriri, \textit{Classical Banach spaces. II, Function Spaces,} Springer-Verlag, Berlin, 1979.

\bibitem{RaRe1991}
M.~M. Rao and Z.~D. Ren, \textit{Theory of {O}rlicz spaces}, Monographs and Textbooks in Pure and Applied Mathematics, vol. 146, Marcel Dekker Inc., New York, 1991.

\end{thebibliography}
\end{document}